
\documentclass[11pt]{amsart}%
\usepackage{graphicx,amscd,color,amsmath,amsfonts,amssymb,geometry}
\usepackage{amsmath}
\usepackage{amsfonts}
\usepackage{amssymb}
\usepackage{graphicx}%
\setcounter{MaxMatrixCols}{30}
\providecommand{\U}[1]{\protect\rule{.1in}{.1in}}
\newtheorem{theorem}{Theorem}[section]
\newtheorem{remark}[theorem]{Remark}

\newtheorem{proposition}[theorem]{Proposition}

\geometry{left=2.5cm,right=2.5cm,top=3cm,bottom=3cm,headheight=3mm}
\begin{document}
\title[New lower bounds for the constants in the Hardy--Littlewood inequality]{New lower bounds for the constants in the real polynomial Hardy--Littlewood inequality}
\author[W. Cavalcante]{W. Cavalcante}
\address{Departamento de Matem\'{a}tica, \\
\indent Universidade Federal de Pernambuco, \\
\indent50.740-560 - Recife, Brazil.}
\email{wasthenny.wc@gmail.com}
\author[D. N\'{u}\~{n}ez]{D. N\'{u}\~{n}ez-Alarc\'{o}n}
\address{Departamento de Matem\'{a}tica, \\
\indent Universidade Federal de Pernambuco, \\
\indent50.740-560 - Recife, Brazil.}
\email{danielnunezal@gmail.com}
\author[D. Pellegrino]{D. Pellegrino}
\address{Departamento de Matem\'{a}tica, \\
\indent Universidade Federal da Para\'{\i}ba, \\
\indent 58.051-900 - Jo\~{a}o Pessoa, Brazil.}
\email{pellegrino@pq.cnpq.br and dmpellegrino@gmail.com}
\thanks{D. Pellegrino was supported by CNPq Grant 477124/2012-7 and INCT-Matem\'{a}tica.}
\keywords{Hardy--Littlewood inequality.}

\begin{abstract}
In this short note we obtain new lower bounds for the constants of the real
Hardy--Littlewood inequality for $m$-linear forms on $\ell_{p}^{2}$ spaces
when $p=2m$ and for certain values of $m$. The real and complex cases for the
general case $\ell_{p}^{n}$ were recently investigated in \cite{ara} and
\cite{sete}. When $n=2$ our results improve the best known estimates for these constants.

\end{abstract}
\maketitle

\section{Introduction}

The Hardy--Littlewood inequality for multilinear forms and homogeneous
polynomials in $\ell_{p}$ spaces dates back to 1934 \cite{hardy} for the
bilinear case, as a beautiful and highly nontrivial optimal extension of
Littlewood's $4/3$ inequality from $\ell_{\infty}^{n}$ to $\ell_{p}^{n}$
spaces. In 1980 Praciano-Pereira \cite{pra} extended the Hardy--Littlewood
inequality to $m$-linear operators for $p\geq2m$ and recently, in 2013, Dimant
and Sevilla-Peris \cite{dimant} obtained an optimal extension for the case
$m<p<2m.$ Both the multilinear and polynomial cases of this inequality were
deeply investigated in recent years and perhaps the main motivation is the
fact that when $p=\infty$ we recover the classical Bohnenblust--Hille
inequality \cite{bh} from 1931, which has found, since 2011, new striking
applications in many fields of Mathematics and even in Quantum Information
Theory (see, for instance, \cite{bps, ann, monta} and the references therein).

\bigskip Henceforth, for any map $f:\mathbb{R}\rightarrow\mathbb{R}$ we
define
\[
f\left(  \infty\right)  :=\lim_{p\rightarrow\infty}f(p).
\]

\bigskip For $\mathbb{K}$ be $\mathbb{R}$ or $\mathbb{C}$ and $\alpha
=(\alpha_{1},\ldots,\alpha_{n})\in{\mathbb{N}}^{n}$, we define $|\alpha
|:=\alpha_{1}+\cdots+\alpha_{n}$. By $\mathbf{x}^{\alpha}$ we shall mean the
monomial $x_{1}^{\alpha_{1}}\cdots x_{n}^{\alpha_{n}}$ for $\mathbf{x}%
=(x_{1},\ldots,x_{n})\in{\mathbb{K}}^{n}$. The polynomial Bohnenblust--Hille
inequality (see \cite{bh}, 1931) asserts that, given $m,n\geq1$, there is a
constant $B_{\mathbb{K},m}^{\mathrm{pol}}\geq1$ such that
\[
\left(  {\sum\limits_{\left\vert \alpha\right\vert =m}}\left\vert a_{\alpha
}\right\vert ^{\frac{2m}{m+1}}\right)  ^{\frac{m+1}{2m}}\leq B_{\mathbb{K}%
,m}^{\mathrm{pol}}\left\Vert P\right\Vert
\]
for all $m$-homogeneous polynomials $P:$ $\ell_{\infty}^{n}\rightarrow
\mathbb{K}$ given by
\[
P(x_{1},...,x_{n})=\sum_{|\alpha|=m}a_{\alpha}\mathbf{{x}^{\alpha},}%
\]
and all positive integers $n$, where $\Vert P\Vert:=\sup_{z\in B_{\ell
_{\infty}^{n}}}|P(z)|$. It is well-known that the exponent $\frac{2m}{m+1}$ is sharp.

When one tries to replace $\ell_{\infty}^{n}$ by $\ell_{p}^{n}$ the extension
of the polynomial Bohnenblust--Hille inequality is called polynomial
Hardy--Littlewood inequality and the optimal exponents are $\frac
{2mp}{mp+p-2m}$ for $2m\leq p\leq\infty$ and $\frac{p}{p-m}$ for $m<p<2m$.
More precisely, given $m,n\geq1$, there is a constant $C_{\mathbb{K}%
,m,p}^{\mathrm{pol}}\geq1$ such that
\[
\left(  {\sum\limits_{\left\vert \alpha\right\vert =m}}\left\vert a_{\alpha
}\right\vert ^{\frac{2mp}{mp+p-2m}}\right)  ^{\frac{mp+p-2m}{2mp}}\leq
C_{\mathbb{K},m,p}^{\mathrm{pol}}\left\Vert P\right\Vert ,
\]
for all $m$-homogeneous polynomials on $\ell_{p}$ with $2m\leq p\leq\infty$
given by $P(x_{1},\ldots,x_{n})=\sum_{|\alpha|=m}a_{\alpha}\mathbf{{x}%
^{\alpha}}$. When $m<p<2m$ the optimal exponent is $\frac{p}{p-m}.$

The search for precise estimates in the polynomial and multilinear
Bohnenblust--Hille inequalities has been pursued by many authors (\cite{bps,
ann, diana, ala, npss} and the references therein) and is important for many
different reasons besides its intrinsic mathematical challenge. The knowledge
of precise estimates for the constants of the Bohnenblust--Hille inequalities
is a crucial point for applications (see \cite{bps, ann, monta}). In this
paper we improve the best known lower bounds for the constants of the
polynomial Hardy--Littlewood inequality for the case of real scalars for
certain values of $m$. In some sense, there is a big difference between the
Hardy--Littlewood and Bohnenblust--Hille inequalities. While in the
Bohnenblust--Hille inequality the domain $\ell_{\infty}^{n}$ remains
unchanged, in the Hardy--Littlewood inequality the variable $p$ in $\ell
_{p}^{n}$ depends on the degree of multilinearity. For this reason the
expression \textquotedblleft asymptotic growth\textquotedblright\ makes sense
for the constants of the Bohnenblust--Hille inequality but needs much care in
the case of the Hardy--Littlewood inequality. In this paper, we choose the
case $p=2m$, which seems to be a distinguished case (see comments in
\cite{ara2}) but similar investigation can be done for the other cases.

\bigskip

\section{Lower estimates of constants on the Hardy--Littlewood inequality}

\bigskip In this section we use polynomials introduced in 2013 by J.R. Campos
et al. \cite{campos1} and also in 2015 by P. Jimenez et al. \cite{GUSE} in the
case of the Bohnenblust--Hille inequalities:%

\begin{equation}%
\begin{tabular}
[c]{lll}%
$P_{2}\left(  x,y\right)  $ & $=$ & $\pm\left(  ax^{2}-ay^{2}\pm
2\sqrt{a\left(  1-a\right)  }xy\right)  $\\
$P_{3}\left(  x,y\right)  $ & $=$ & $ax^{3}+bx^{2}y+bxy^{2}+ay^{3}$\\
$P_{5}\left(  x,y\right)  $ & $=$ & $ax^{5}-bx^{4}y-cx^{3}y^{2}+cx^{2}%
y^{3}+bxy^{4}-ay^{5}$\\
$P_{6}\left(  x,y\right)  $ & $=$ & $ax^{5}y+bx^{3}y^{3}+axy^{5}$\\
$P_{7}\left(  x,y\right)  $ & $=$ & $-ax^{7}+bx^{6}y+cx^{5}y^{2}-dx^{4}%
y^{3}-dx^{3}y^{4}+cx^{2}y^{5}+bxy^{6}-ay^{7}$\\
$P_{8}\left(  x,y\right)  $ & $=$ & $-ax^{7}y+bx^{5}y^{3}-bx^{3}y^{5}+axy^{7}%
$\\
$P_{10}\left(  x,y\right)  $ & $=$ & $ax^{9}y+bx^{7}y^{3}+x^{5}y^{5}%
+bx^{3}y^{7}+axy^{9}.$%
\end{tabular}
\ \ \ \label{polis}%
\end{equation}
When dealing with $p=\infty$, the domain of these polynomials is always
$l_{\infty}^{2}\left(  \mathbb{R}\right)  $ and, in each case, it was
investigated in \cite{campos1, GUSE} the best choice of the parameters
$a,b,c,d$ in such a way that we obtain good (er even best) lower bounds for
the Bohnenblust--Hille inequality when the domain is $l_{\infty}^{2}\left(
\mathbb{R}\right)  $.

At a first glance we shall use the same polynomials from \cite{campos1, GUSE}
(that is, with the same parameters $a,b,c,d$ from \cite{campos1, GUSE}) to
estimate the constants $C_{\mathbb{R},m,2m}^{\mathrm{pol}}$ of the real
polynomial Hardy--Littlewood inequality when $p=2m$. For this task we shall
estimate the polynomial norms and recall that now the domain is $l_{2m}%
^{2}\left(  \mathbb{R}\right)  $. Estimating by analytical means the norms
$\Vert P\Vert:=\sup_{z\in B_{\ell_{p}^{2}}}|P(z)|$ when $2m\leq p<\infty$
seems to be not possible in general (or, at least, highly nontrivial) and for
this task we shall make a computer-assisted approach. The computational
procedure uses the software Matlab with the interior point algorithm, by
discretizing the region to find the a good initial point (an initial point to
search the maximum); after that we use an optimization algorithm and a global
search method to obtain the maximum. We recall that the parameters of the
above polynomials in \cite{campos1, GUSE} are:%
\[%
\begin{tabular}
[c]{lll}%
$P_{2}\left(  x,y\right)  $ & $\rightarrow$ & $a=$ $0.867835$\\
$P_{3}\left(  x,y\right)  $ & $\rightarrow$ & $a=1,~b=-1.6692$\\
$P_{5}\left(  x,y\right)  $ & $\rightarrow$ &
$a=0.19462,~b=0.66008,~c=0.97833$\\
$P_{6}\left(  x,y\right)  $ & $\rightarrow$ & $a=1,~b=-2.2654$\\
$P_{7}\left(  x,y\right)  $ & $\rightarrow$ &
$a=0.05126,~b=0.22070,~c=0.50537,~d=0.71044$\\
$P_{8}\left(  x,y\right)  $ & $\rightarrow$ & $a=0.15258,~b=0.64697$\\
$P_{10}\left(  x,y\right)  $ & $\rightarrow$ & $a=0.0938,~b=-0.5938.$%
\end{tabular}
\ \ \
\]
The following table shows the estimates obtained for $\Vert P_{m}\Vert
=\sup_{z\in B_{\ell_{2m}^{2}}}|P(z)|$ and $C_{\mathbb{R},m,2m}^{\mathrm{pol}}%
$:%
\[%
\begin{tabular}
[c]{lll}%
Polynomial $P_{m}$ & Norm of $P_{m}$ in $B_{\ell_{2m}^{2}}$ & Lower estimate
for $C_{\mathbb{R},m,2m}^{\mathrm{pol}}$\\
$P_{2}$ & $\approx0.991227730027263$ & $\geq1.414213562373095>\left(
1.18\right)  ^{2}$\\
$P_{3}$ & $\approx1.336725475130557$ & $\geq2.058620016006847>\left(
1.27\right)  ^{3}$\\
$P_{5}$ & $\approx0.286160496407654$ & $\geq5.911278874557850>\left(
1.42\right)  ^{5}$\\
$P_{6}$ & $\approx0.265449175431079$ & $\geq10.06063557813303>\left(
1.46\right)  ^{6}$\\
$P_{7}$ & $\approx0.071365688615534$ & $\geq17.850856996050050>\left(
1.50\right)  ^{7}$\\
$P_{8}$ & $\approx0.029851212141614$ & $\geq31.491320225749660>\left(
1.53\right)  ^{9}$\\
$P_{10}$ & $\approx0.015289940437748$ & $\geq85.844178992096431>\left(
1.56\right)  ^{10}$%
\end{tabular}
\ \ \ \ \ \
\]

\bigskip These estimates are better than the best known estimates from
\cite{sete}. In the next section we obtain even better estimates.

\begin{remark}
\bigskip An interesting point that must be stressed is that, contrary to what
is done in the case $p=\infty$ (the Bohnenblust--Hille inequality) in
\cite{campos1, GUSE}, when good estimates for the case $km$-homogeneous for
$k>1$ are obtained by using the polynomials for the case $m$-homogeneous just
by multiplying the respective polynomials, in the case of the
Hardy--Littlewood inequality this procedure is not possible because the domain
of the $m$-homogeneous polynomials depend on $m$, i.e., in general $p>m,$ and
in the case studied here $p=2m$, so it is obviously not possible to proceed as
in the case of the Bohnenblust--Hille inequality.
\end{remark}

\section{Lower estimates of constants on the Hardy--Littlewood inequality:
finding new polynomials and better estimates}

The purpose of this section is to find the best parameters $a,b,c,d$ for the
polynomials (\ref{polis}) in such a way that we obtain lower bounds for
$C_{\mathbb{R},m,2m}^{\mathrm{pol}}$ better than those of the previous
section. Here the degree of complexity increases since we would have to
formally test infinitely many possibilities for the parameters $a,b,c,d$ to
maximize the quotient $\frac{\left\vert P_{m}\right\vert _{2}}{\left\Vert
P_{m}\right\Vert }$. Besides, contrary to the case of the Bohnenblust--Hille
theory, there seems to be (as far as we know) no theory of extremal
polynomials developed for this case. It is well-known that the maxima of
homogeneous polynomials in the unit ball is achieved in the unit sphere and so
we shall work on the unit sphere; this helps in the computational estimates.
We also parametrize $y$ as a function of $x,$ and this also helps to make
calculations faster.

We have numerical evidence that the best constant $C_{\mathbb{R},2,4}$ when restricted to two variables is given
by the polynomial of the polynomial $P_{2}$ of the previous section.

The estimates obtained in this section can be summarized as follows:

\begin{proposition}
The constants of the real polynomial Hardy--Littlewood inequality satisfy:%

\begin{align*}
C_{\mathbb{R},2,4}^{\mathrm{pol}} &  \geq1.414213562373095\approx\sqrt{2}\\
C_{\mathbb{R},3,6}^{\mathrm{pol}} &  \geq2.236067>\left(  1.30\right)  ^{3}\\
C_{\mathbb{R},5,10}^{\mathrm{pol}} &  \geq6.191704>\left(  1.44\right)  ^{5}\\
C_{\mathbb{R},6,12}^{\mathrm{pol}} &  \geq10.636287>\left(  1.48\right)
^{6}\\
C_{\mathbb{R},7,14}^{\mathrm{pol}} &  \geq18.095148>\left(  1.51\right)
^{7}\\
C_{\mathbb{R},8,16}^{\mathrm{pol}} &  \geq31.727174>\left(  1.54\right)
^{8}\\
C_{\mathbb{R},10,20}^{\mathrm{pol}} &  \geq91.640152>\left(  1.57\right)
^{10}%
\end{align*}

\end{proposition}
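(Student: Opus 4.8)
The plan is to obtain each lower bound by evaluating the Hardy--Littlewood quotient
\[
C_{\mathbb{R},m,2m}^{\mathrm{pol}}\geq\frac{\left(\sum_{|\alpha|=m}|a_\alpha|^{\frac{2m\cdot 2m}{m\cdot 2m+2m-2m}}\right)^{\frac{m\cdot 2m+2m-2m}{2m\cdot 2m}}}{\left\Vert P_m\right\Vert_{B_{\ell_{2m}^2}}}
\]
on the explicit polynomials $P_m$ listed in (\ref{polis}), where I specialize $p=2m$ so that the optimal exponent $\tfrac{2mp}{mp+p-2m}$ collapses to $\tfrac{4m}{2m}=\tfrac{m+1}{m}\cdot\ldots$; concretely for $p=2m$ the left-hand exponent is $\tfrac{4m^2}{2m^2}= 2m/m$, and I would first carefully record the exact value of this exponent and compute the numerator $\left(\sum_{|\alpha|=m}|a_\alpha|^{\text{exp}}\right)^{1/\text{exp}}$ symbolically from the coefficient vectors. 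The numerator depends only on the chosen parameters $a,b,c,d$, so the task splits cleanly into an algebraic part (the numerator, computable in closed form) and an analytic/numerical part (the denominator, the sup-norm on $B_{\ell_{2m}^2}$).

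The key steps, in order, are as follows. First, for each fixed $m\in\{2,3,5,6,7,8,10\}$ I would set up the one-variable reduction: since $P_m$ is $m$-homogeneous, its maximum over $B_{\ell_{2m}^2}$ is attained on the sphere $\{|x|^{2m}+|y|^{2m}=1\}$, so I parametrize $y$ as a function of $x$ (as the text indicates) and reduce $\left\Vert P_m\right\Vert$ to a single-variable maximization. Second, I would numerically maximize this single-variable function over the sphere using the interior-point method together with a global-search safeguard (discretizing to seed a good initial point, exactly the Matlab pipeline described before the previous section's table), now optimizing jointly over the free parameters $a,b,c,d$ as well to find the parameter choice maximizing the quotient. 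Third, I record the optimal parameters and the resulting norm, then form the quotient to read off the lower bound for $C_{\mathbb{R},m,2m}^{\mathrm{pol}}$. Finally, I verify the stated clean inequalities, e.g.\ $C_{\mathbb{R},5,10}^{\mathrm{pol}}\geq 6.191704>(1.44)^5$, by direct comparison of the computed bound against the displayed power of the base.

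I expect the main obstacle to be the denominator: rigorously certifying the value of $\left\Vert P_m\right\Vert=\sup_{z\in B_{\ell_{2m}^2}}|P_m(z)|$, since an analytic closed form is unavailable for $2m\leq p<\infty$ in general, and a numerical maximizer only \emph{lower}-bounds the norm unless one controls the global optimum. Because the final constant is a quotient with the norm in the denominator, a numerical value that underestimates the true norm would \emph{over}state the lower bound for $C_{\mathbb{R},m,2m}^{\mathrm{pol}}$; thus the delicate point is to ensure the global-search step has genuinely located the global maximum of $|P_m|$ on the sphere and not merely a local one. The global-search method combined with dense discretization of the initial data is precisely what guards against this, and the honesty of the reported bounds rests on that global maximization being reliable. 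The secondary subtlety is the optimization over the parameters $a,b,c,d$ themselves: unlike the $p=\infty$ Bohnenblust--Hille setting, the optimal parameters here depend on $m$ through the domain $\ell_{2m}^2$, so (as the preceding Remark stresses) one cannot bootstrap from lower-degree cases and must re-optimize for each $m$ separately. The case $m=2$ is reassuring, since the text already notes numerical evidence that the extremal two-variable polynomial is $P_2$, yielding the sharp-looking $C_{\mathbb{R},2,4}^{\mathrm{pol}}\geq\sqrt{2}$.
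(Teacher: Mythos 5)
Your proposal follows essentially the same route as the paper's proof: evaluate the Hardy--Littlewood quotient on the polynomials of (\ref{polis}) with parameters re-optimized separately for each $m$ on the domain $\ell_{2m}^{2}$, reducing the sup-norm to the unit sphere and computing it by the Matlab interior-point/global-search pipeline, then reading off the lower bound from the quotient --- noting only that for $p=2m$ the exponent $\tfrac{2mp}{mp+p-2m}$ collapses to exactly $2$, so the numerator is simply the $\ell_{2}$ norm of the coefficient vector. Your identified weak point (the numerical value in the denominator must not underestimate the true global maximum) is exactly the caveat inherent in the paper's computer-assisted argument as well.
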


\begin{proof}
The parameters that we found for $a,b,c,d$ are the following, which improve
the estimates of the previous section we note that the estimates are (we note
that when trying to find the coefficients we have numerical evidence that the
best constant for the case $m=2$ is given by the polynomial of the polynomial
$P_{2}$ of the previous section:%

\[%
\begin{tabular}
[c]{lll}%
$P_{3}\left(  x,y\right)  $ & $\rightarrow$ & $a=1,~b\approx-2$\\
$P_{5}\left(  x,y\right)  $ & $\rightarrow$ & $a\approx0.104245,~b\approx
0.333366,~c\approx0.541712$\\
$P_{6}\left(  x,y\right)  $ & $\rightarrow$ & $a=1,~b\approx-2.363681$\\
$P_{7}\left(  x,y\right)  $ & $\rightarrow$ & $a\approx0.0555555,~b\approx
0.2444444,~c\approx0.5555555,~d\approx0.8000000$\\
$P_{8}\left(  x,y\right)  $ & $\rightarrow$ & $a\approx0.210344,~b\approx
0.896551$\\
$P_{10}\left(  x,y\right)  $ & $\rightarrow$ & $a\approx0.085714,~b\approx
-0.577551.$%
\end{tabular}
\ \ \
\]

\begin{figure}[tbh]
\centering
\includegraphics[scale = .7]{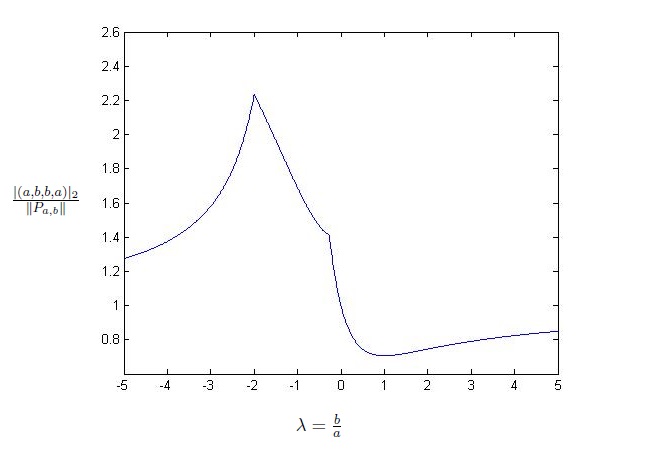}\caption{Graph of the quotient
$\frac{|(a,b,b,a)|_{2}}{\|P_{a,b} \|}$ as function of $\lambda= \frac{b}{a}$ -
Polynomial $P_{3}$}%
\end{figure}

\begin{figure}[tbh]
\centering
\includegraphics[scale = .7]{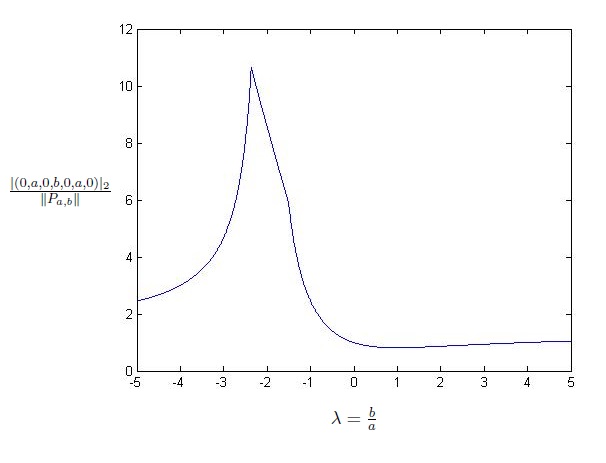}\caption{Graph of the quotient
$\frac{|(0,a,0,b,0,a,0)|_{2}}{\|P_{a,b} \|}$ as function of $\lambda= \frac
{b}{a}$ - Polynomial $P_{6}$}%
\end{figure}

The following table shows the estimates obtained (the estimates for $P_{2}$
are the same of the previous section):%
\[%
\begin{tabular}
[c]{lll}%
Polynomial $P_{m}$ & Norm of $P_{m}$ in $B_{\ell_{2m}^{2}}$ & Lower estimate
for $C_{\mathbb{R},m,2m}^{\mathrm{pol}}$\\
$P_{3}$ & $\approx1.414213$ & $\geq2.236067$\\
$P_{5}$ & $\approx0.147219$ & $\geq6.191704$\\
$P_{6}$ & $\approx0.258967$ & $\geq10.636287$\\
$P_{7}$ & $\approx0.078601$ & $\geq18.095148$\\
$P_{8}$ & $\approx0.041048$ & $\geq31.727174$\\
$P_{10}$ & $\approx0.014151$ & $\geq91.640152$%
\end{tabular}
\ \
\]

\end{proof}

\section{Asymptotic hypercontractivity constant $H_{R,p}\left(  n\right)  $}

In \cite{campos1, GUSE} it is defined:%
\[
C_{\mathbb{K},m,\infty}^{\mathrm{pol}}\left(  n\right)  :=\inf\left\{
C>0:\left\vert P\right\vert _{\frac{2m}{m+1}}\leq C\left\Vert P\right\Vert
\text{, for all }P\in\mathcal{P}\left(  ^{m}\ell_{\infty}^{n}\left(
\mathbb{K}\right)  \right)  \right\}
\]
and%
\[
H_{\mathbb{K},\infty}:=\underset{m}{\lim\sup}\sqrt[m]{C_{\mathbb{K},m,\infty
}^{\mathrm{pol}}}%
\]
and
\[
H_{\mathbb{K},\infty}\left(  n\right)  :=\underset{m}{\lim\sup}\sqrt[m]%
{C_{\mathbb{K},m,\infty}^{\mathrm{pol}}\left(  n\right)  }.%
\]
Analogously, we can define for $2m\leq p\leq\infty$:%
\[
C_{\mathbb{K},m,p}^{\mathrm{pol}}\left(  n\right)  :=\inf\left\{
C>0:\left\vert P\right\vert _{\frac{2mp}{mp+p-2m}}\leq C\left\Vert
P\right\Vert \text{, for all }P\in\mathcal{P}\left(  ^{m}\ell_{p}^{n}\left(
\mathbb{K}\right)  \right)  \right\}
\]
and
\[
H_{\mathbb{K},p}:=\underset{m}{\lim\sup}\sqrt[m]{C_{\mathbb{K},m,p}%
^{\mathrm{pol}}}%
\]
and
\[
H_{\mathbb{K},p}\left(  n\right)  :=\underset{m}{\lim\sup}\sqrt[m]%
{C_{\mathbb{K},m,p}^{\mathrm{pol}}\left(  n\right)  }%
\]
For the real case, it has been recently proved in \cite{campos2} that
$H_{\mathbb{R},p}$ $\geq2$. However, not many exact values of $H_{\mathbb{R}%
,p}\left(  n\right)  $ are known so far. In \cite{campos1, GUSE} it is studied $H_{\mathbb{R},\infty}\left(  2\right)  $. In fact, in those works it is concluded that
\[
H_{\mathbb{R},\infty}\left(  2\right)  \geq1.65171.
\]
In the present section we shall estimate $H_{\mathbb{R},2m}\left(  2\right)  $. Here, the analytical estimates have shown even more difficult, since the domains in which we work do not keep the same, contrary to what happens when working in $B_{\ell_{\infty}^{n}}$. For this reason, the use of new computational techniques to estimate $H_{\mathbb{R}%
,2m}\left(  2\right)  $ are needed. Our computational conclusion is that 
\[
H_{\mathbb{R},2m}\left(  2\right)  \geq1.65362.
\]

In all estimates we have used $m=600$, but besides we want to estimate  $H_{\mathbb{R},2m}\left(  2\right)  $ (working as in \cite{campos1, GUSE}), using the polynomials defined in 
(\ref{polis}). A first attempt in this direction would be to use the new parameters introduced in the precious section.

\bigskip In the following tables we use the polynomials from 
(\ref{polis}).
\[%
\begin{tabular}
[c]{lll}%
Polynomial $P_{600}$ & New parameters of the previous section &
Lower estimate for $H_{\mathbb{R},1200}\left(  2\right)  $\\
$(P_{3})^{200}$ & $a=1,~b\approx-2$ & $\geq1.288250$\\
$(P_{5})^{120}$ & $a\approx0.104245,~b\approx0.333366,~c\approx0.541712$ &
$\geq1.457854$\\
$(P_{6})^{100}$ & $a=1,~b\approx-2.363681$ & $\geq1.509926$\\
$(P_{8})^{75}$ & $a\approx0.191919,~b\approx0.8181818$ & $\geq1.637228$\\
$(P_{10})^{60}$ & $a\approx0.085714,~b\approx-0.577551.$ & $\geq1.638615$%
\end{tabular}
\ \ \ \ \ \ \
\]
Following the spirit of the previous section, it makes sense to think that as we are working in a new domain, different from the one where our parameters seem effective, it may exist better parameters furnishing better estimates for  $H_{\mathbb{R}%
,2m}\left(  2\right)  $. We also observe that when $m$ is big, in some sense the ball 
$B_{\ell_{2m}^{n}}$ is close to $B_{\ell_{\infty}^{n}}$. It makes us suspect that it is probably more convenient to consider the parameters from
\cite{campos1, GUSE} when estimating $H_{\mathbb{R},1200}\left(  2\right)
$, and it is in fact true, as the following table shows:%
\[%
\begin{tabular}
[c]{lll}%
Polynomial $P_{600}$ & Parameters from \cite{campos1, GUSE} & Lower estimate
for $H_{\mathbb{R},1200}\left(  2\right)  $\\
$(P_{3})^{200}$ & $a=1,~b\approx-1.6692$ & $\geq1.422344$\\
$(P_{5})^{120}$ & $a\approx0.19462,~b\approx0.66008,~c\approx0.97833$ &
$\geq1.549722$\\
$(P_{6})^{100}$ & $a=1,~b\approx-2.2654$ & $\geq1.584313$\\
$(P_{8})^{75}$ & $a\approx0.15258,~b\approx0.64697$ & $\geq1.640430$\\
$(P_{10})^{60}$ & $a\approx0.0938,~b\approx-0.5938.$ & $\geq1.651703$%
\end{tabular}
\ \ \ \ \ \ \
\]
However it remains the possibility of finding even better parameters. As a matter of fact, this is possible via an exhaustive computational search (we fixed one of the parameters and varied the other). We thus obtain:
\[%
\begin{tabular}
[c]{lll}%
Polynomial $P_{600}$ & Slightly better parameters & Lower estimates for
$H_{\mathbb{R},1200}\left(  2\right)  $\\
$(P_{3})^{200}$ & $a=1,~b\approx-1.67053$ & $\geq1.422433$\\
$(P_{5})^{120}$ & $a\approx0.19462,~b\approx0.66,~c\approx0.97833$ &
$\geq1.549744$\\
$(P_{6})^{100}$ & $a=1,~b\approx-2.2663$ & $\geq1.584430$\\
$(P_{8})^{75}$ & $a\approx0.15258,~b\approx0.64698$ & $\geq$ $1.640436$\\
$(P_{10})^{60}$ & $a\approx0.0938,~b\approx-0.5934.$ & $\geq1.65362$%
\end{tabular}
\ \ \ \ \ \ \
\]
We note that the new parameter is very close to the parameter of the case $B_{\ell_{\infty}^{n}}$.

\section{Final comments}

\bigskip A natural problem that arise from our calculations is to identify%
\[
\lim\sup_{m}\sqrt[m]{C_{\mathbb{R},m,2m}^{\mathrm{pol}}}%
\]
when restricted to polynomials in $\ell_{2m}^{2}.$ Further problems also arise
naturally, such as the constants for polynomials in general $\ell_{p}^{m}$
spaces and whether is possible or not to face these problems analytically
instead of numerically.

\bigskip

\end{document}